\documentclass[10pt]{amsart}  
\usepackage{amsmath,amssymb,amsfonts,graphicx, hyperref}

\title{Fourier analysis, linear programming, and densities of distance
  avoiding sets in $\R^n$}

\author{Fernando M\'ario de Oliveira Filho}
\address{F.M.~de Oliveira Filho, Centrum voor Wiskunde en Informatica (CWI),
Kruislaan 413, 1098 SJ Amsterdam, The Netherlands}
\email{f.m.de.oliveira.filho@cwi.nl}

\author{Frank Vallentin} 
\address{F.~Vallentin, Centrum voor Wiskunde en Informatica (CWI),
Kruislaan 413, 1098 SJ Amsterdam, The Netherlands}
\email{f.vallentin@cwi.nl}

\thanks{The first author was partially supported by CAPES/Brazil under
  grant BEX 2421/04-6. The second author was partially supported by
  the Deutsche Forschungsgemeinschaft (DFG) under grant SCHU
  1503/4. The second author thanks the Hausdorff Research Institute
  for Mathematics (Bonn) for its hospitality and support.}

\subjclass{42B05, 52C10, 52C17, 90C05} 
% 42B05 Fourier series and coefficients
% 52C10 Erdos problems and related topics of discrete geometry
% 52C17 Packing and covering in $n$ dimensions
% 90C05 Linear programming
\keywords{Nelson-Hadwiger problem, measurable chromatic number, linear programming, almost periodic functions, autocorrelation function}

\date{November 19, 2008}

\newcommand{\defi}[1]{\textit{#1}}

\newcommand{\R}{\mathbb{R}}

\newcommand{\Z}{\mathbb{Z}}
\newcommand{\C}{\mathbb{C}}

\newtheorem{defin}{Definition}[section]

\newtheorem{theorem}[defin]{Theorem}

\DeclareMathOperator{\ort}{O}
\DeclareMathOperator{\vol}{vol}

\newcommand{\chim}{\chi_{\mathrm{m}}}

\newcommand{\odelta}{\overline{\delta}}

\begin{document}

\begin{abstract}
  In this paper we derive new upper bounds for the densities of
  measurable sets in $\R^n$ which avoid a finite set of prescribed
  distances. The new bounds come from the solution of a linear
  programming problem.  We apply this method to obtain new upper
  bounds for measurable sets which avoid the unit distance in
  dimensions $2, \ldots, 24$. This gives new lower bounds for the
  measurable chromatic number in dimensions $3, \ldots, 24$.  We apply
  it to get a short proof of a variant of a recent result of Bukh
  which in turn generalizes theorems of Furstenberg, Katznelson, and
  Weiss and Bourgain and Falconer about sets avoiding many distances.
\end{abstract}

\maketitle

\markboth{F.M.~de Oliveira Filho, F.~Vallentin}{Fourier analysis,
  linear programming, and densities of distance avoiding sets}

\section{Introduction}
\label{sec:introduction}

Let $d_1$, \dots,~$d_N$ be positive real numbers.  We say that a
subset~$A$ of the $n$-dimensional Euclidean space~$\R^n$ \defi{avoids
  the distances} $d_1$, \dots,~$d_N$ if the distance between any two
points in~$A$ is never $d_1$, \dots,~$d_N$. We define the \defi{upper
  density} of a Lebesgue measurable set $A \subseteq \R^n$ as
\begin{equation*}
  \overline{\delta}(A) = \limsup_{T \to \infty} \frac{\vol(A \cap [-T,T]^n)}{\vol [-T,T]^n}.
\end{equation*}
In this expression $[-T,T]^n$ denotes the regular cube in~$\R^n$ with
side~$2T$ centered at the origin. We denote the \defi{extreme density} which
a measurable set in~$\R^n$ that avoids the distances $d_1$, \dots,~$d_N$ 
can have by
\begin{equation*}
\begin{split}
m_{d_1, \ldots, d_N}(\R^n) = \sup\{\,\overline{\delta}(A) \; : \; \hbox{}& \text{$A \subseteq \R^n$ is measurable}\\
& \qquad \text{and avoids distances $d_1$, \dots,~$d_N$}\,\}.
\end{split}
\end{equation*}

In this paper we derive upper bounds for this extreme density from the
solution of a linear programming problem. 

To formulate our main theorem we consider the function~$\Omega_n$
given by
\begin{equation}
\label{eq:omegan}
\Omega_n(t) = \Gamma\Big(\frac{n}{2}\Big) \Big(\frac{2}{t}\Big)^{\frac{1}{2}(n-2)} J_{\frac{1}{2}(n-2)}(t),\;\;\text{for $t > 0$},\quad \Omega_n(0) = 1,
\end{equation}
where $J_{\frac{1}{2}(n-2)}$ is the \defi{Bessel function of the first
  kind} with \defi{parameter} $(n-2)/2$. To fix ideas we plotted the
graph of the function $\Omega_4$ in Figure~\ref{fig:omega}.

\renewcommand{\thefigure}{\arabic{section}.\arabic{figure}}

\begin{figure}
\begin{center}
\includegraphics{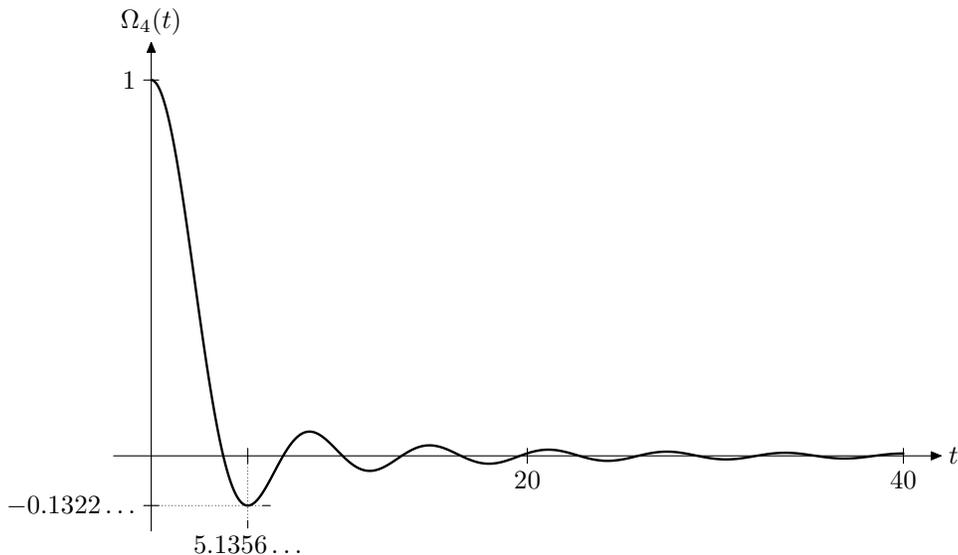}
\end{center}
\caption{Graph of the function $\Omega_4(t) = \frac{2}{t} J_1(t)$}
\label{fig:omega}
\end{figure}

\begin{theorem}
\label{th:main}
Let $d_1$, \dots,~$d_N$ be positive real numbers. Let $A \subseteq
\R^n$ be a measurable set which avoids the distances $d_1$, \dots,~$d_N$. 
Suppose there are real numbers $z_0$, $z_1$, \dots,~$z_N$ which sum
up to at least one and which satisfy
\begin{equation*}
z_0 + z_1 \Omega_n(t d_1) + z_2 \Omega_n(t d_2) + \cdots + z_N \Omega_n(t d_N) \geq 0
\end{equation*}
for all $t \geq 0$. Then, the upper density of~$A$ is at most~$z_0$.
\end{theorem}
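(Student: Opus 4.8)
The plan is to package the forbidden distances into a single nonnegative ``multiplier'' on $\R^n$ and test it against the squared Fourier transform of the indicator of $A$, restricted to a large box. Let $\nu_0$ be the Dirac measure at the origin and, for $j=1,\dots,N$, let $\nu_j$ be the rotation-invariant Borel probability measure carried by the sphere $\{x\in\R^n:|x|=d_j\}$. The classical formula for the Fourier transform of uniform measure on a sphere gives $\widehat{\nu_j}(\xi)=\Omega_n(d_j|\xi|)$, and $\widehat{\nu_0}\equiv 1=\Omega_n(0)$. Thus the hypothesis says exactly that
\[
  g(\xi):=z_0\widehat{\nu_0}(\xi)+z_1\widehat{\nu_1}(\xi)+\cdots+z_N\widehat{\nu_N}(\xi)
        =z_0+\sum_{j=1}^N z_j\,\Omega_n(d_j|\xi|)
\]
is a nonnegative function on $\R^n$ (apply the displayed inequality with $t=|\xi|$); moreover $g(0)=z_0+\cdots+z_N\ge 1$, and letting $|\xi|\to\infty$, where $\Omega_n\to 0$, gives $z_0\ge 0$.

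The core of the argument is the identity
\[
  z_0\,\vol(A\cap C_\rho)=\frac{1}{(2\pi)^n}\int_{\R^n}\bigl|\widehat{1_{A\cap C_\rho}}(\xi)\bigr|^2 g(\xi)\,d\xi,
  \qquad C_\rho:=[-\rho,\rho]^n,
\]
valid for every $\rho>0$. I would prove it by expanding $g=\sum_j z_j\widehat{\nu_j}$ and noting, via Plancherel, that the $j$-th term on the right equals $\int_{\R^n}1_{A\cap C_\rho}(x)\,(1_{A\cap C_\rho}*\nu_j)(x)\,dx$. For $j\ge 1$ the function $1_{A\cap C_\rho}*\nu_j$ is the average of $1_{A\cap C_\rho}$ over spheres of radius $d_j$; since $A$ avoids the distance $d_j$, this average vanishes at every point of $A$, so the term is $0$. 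For $j=0$ the term is $\int_{\R^n}1_{A\cap C_\rho}^2=\vol(A\cap C_\rho)$. This is the step where the distance-avoiding hypothesis enters, and it is short once one has spotted the right pairing.

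Next I would pass to a limit. Set $\delta_\rho:=\vol(A\cap C_\rho)/\vol C_\rho\in[0,1]$ and let $\mu_\rho$ be the positive Borel measure given by $d\mu_\rho=(2\pi)^{-n}(\vol C_\rho)^{-1}\,|\widehat{1_{A\cap C_\rho}}(\xi)|^2\,d\xi$; by Plancherel $\mu_\rho(\R^n)=\delta_\rho\le 1$, and the identity becomes $z_0\delta_\rho=\int_{\R^n}g\,d\mu_\rho$. Choose $T_k\to\infty$ with $\delta_{T_k}\to\overline{\delta}(A)$ and, by Banach--Alaoglu, a subsequence along which $\mu_{T_k}$ converges vaguely to a positive measure $\mu$ (this $\mu$ is the spectral measure of an autocorrelation function of $A$). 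Since $g\ge 0$ is continuous at $0$ with $g(0)\ge 1$, restricting the integral to a ball $B(0,\epsilon)$ around the origin, using lower semicontinuity of mass on open sets for vague convergence, and then letting $\epsilon\to 0$ gives
\[
  z_0\,\overline{\delta}(A)=\lim_{k\to\infty}\int_{\R^n}g\,d\mu_{T_k}\ge g(0)\,\mu(\{0\})\ge\mu(\{0\}).
\]

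Finally I would show $\mu(\{0\})\ge\overline{\delta}(A)^2$, which I expect to be the main obstacle. From $\mu(\overline{B(0,\epsilon)})\ge\limsup_k\mu_{T_k}(\overline{B(0,\epsilon)})$ (upper semicontinuity on compacts), $\mu(\{0\})=\lim_{\epsilon\to 0}\mu(\overline{B(0,\epsilon)})$, and $\mu_{T_k}(\R^n)=\delta_{T_k}\to\overline{\delta}(A)$, it suffices to bound the high-frequency mass $\mu_{T_k}\bigl(\R^n\setminus\overline{B(0,\epsilon)}\bigr)$ above by $\overline{\delta}(A)\bigl(1-\overline{\delta}(A)\bigr)+o(1)$. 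Writing $1_{A\cap C_\rho}=(1_{A\cap C_\rho}-\delta_\rho 1_{C_\rho})+\delta_\rho 1_{C_\rho}$, the Fourier transform splits into two pieces: the first contributes to the total mass exactly $(\vol C_\rho)^{-1}\|1_{A\cap C_\rho}-\delta_\rho 1_{C_\rho}\|_2^2=\delta_\rho(1-\delta_\rho)$ (a one-line computation), while the mass of $\delta_\rho\widehat{1_{C_\rho}}$ outside $B(0,\epsilon)$ is $o(\vol C_\rho)$ because $\widehat{1_{C_\rho}}$ concentrates its $L^2$-norm at frequency scale $1/\rho$. The elementary bound $|a+b|^2\le(1+\eta)|a|^2+(1+\eta^{-1})|b|^2$ followed by $\eta\to 0$ then yields the claim. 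Combining with the previous display gives $z_0\,\overline{\delta}(A)\ge\overline{\delta}(A)^2$, hence $z_0\ge\overline{\delta}(A)$ when $\overline{\delta}(A)>0$, and $z_0\ge 0=\overline{\delta}(A)$ otherwise. The only input borrowed from outside is $\widehat{\nu_j}(\xi)=\Omega_n(d_j|\xi|)$; everything else is Plancherel together with a routine weak-$*$ compactness argument, the one genuinely delicate point being the control of Fourier mass escaping to high frequencies that lies behind $\mu(\{0\})\ge\overline{\delta}(A)^2$.
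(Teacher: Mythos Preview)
Your argument is correct. The key identity, the vanishing of the spherical-average terms via the forbidden distances, and the Fourier-concentration estimate $\mu(\{0\})\ge\overline{\delta}(A)^2$ all check out. (Incidentally, the passage through a vague limit measure $\mu$ is not strictly needed: you can bound $\mu_\rho(B(0,\epsilon))$ from below directly and then send $\rho\to\infty$, $\epsilon\to0$, $\eta\to0$ in that order.)

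The paper, however, takes a considerably shorter route. Instead of working with the truncated sets $A\cap C_\rho$ and Fourier transforms on $\R^n$, it first reduces to the case where $A$ is \emph{periodic} (tile $\R^n$ by copies of $A\cap[-T+d,T-d]^n$ along $2T\Z^n$). For a periodic set one has genuine Fourier \emph{series}, so the autocorrelation $\varphi(y)=\overline{\delta}(A\cap(A-y))=\sum_u|\widehat{1_A}(u)|^2e^{iu\cdot y}$ is an exact identity, and after spherical averaging one obtains $f(y)=\sum_{t\ge 0}\alpha(t)\Omega_n(t\|y\|)$ with $\alpha(0)=|\widehat{1_A}(0)|^2=\overline{\delta}(A)^2$ and $\sum_t\alpha(t)=\overline{\delta}(A)$ on the nose. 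The bound $\overline{\delta}(A)\le z_0$ then follows from a single line of weak LP duality: $\alpha(0)\le\sum_t\alpha(t)\,g(t)=z_0\sum_t\alpha(t)$, since the $d_k$-terms vanish. In effect, the periodic reduction replaces your entire weak-$*$ compactness and high-frequency-mass argument by the trivial fact that for Fourier series the zero coefficient is exactly the mean value. Your approach buys you a proof that stays on $\R^n$ and produces a genuine spectral measure of $A$ as a by-product; the paper's approach buys brevity, with no limits beyond the periodic reduction itself.
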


In Section~\ref{sec:main} we provide a proof where we make
essential use of basic harmonic analysis, which we briefly recall. In
the sections that follow we apply the main theorem in a variety of
situations: sets avoiding one distance, sets avoiding two distances,
and sets avoiding many distances. For the history of these Euclidean
distance problems we refer to the surveys by Sz\'ekely~\cite{Sze2002}
and Sz\'ekely and Wormald~\cite{CFG} and the references therein.

Sets avoiding one distance have been studied by combinatorialists
because of their relation to the measurable chromatic number of the
Euclidean space. This is the minimum number of colors one needs to
color all points in~$\R^n$ so that any two points at distance~$1$
receive different colors and so that points receiving the same color
form Lebesgue measurable sets; it will be denoted
by~$\chim(\R^n)$. Since every color class of a coloring provides a
measurable set which avoids the distance~$1$, we have the inequality
\begin{equation}
\label{eq:fundamentalineq}
m_1(\R^n) \cdot \chim(\R^n) \geq 1.
\end{equation}

For the plane it is only known that $5 \leq \chim(\R^2) \leq 7$, where
the lower bound is due to Falconer~\cite{Fal} and the upper bound
comes e.g.~from a coloring one constructs using a tiling by regular
hexagons with circumradius slightly less than~$1$.  Erd\H{o}s
conjectured that $m_1(\R^2) < 1/4$ so that~\eqref{eq:fundamentalineq}
would yield an alternative proof of Falconer's result. So far the best
known results on $m_1(\R^2)$ are the lower bound $m_1(\R^2) \geq
0.2293$ by Croft~\cite{Cro} and the upper bound $m_1(\R^2) \leq 12/43
\approx 0.2790$ by Sz\'ekely~\cite{Sze1984}.  In
Section~\ref{sec:onedistance} we compute new upper bounds for
$m_{1}(\R^n)$ for dimensions $n = 2$, \dots,~$24$ based on a
strengthening of our main theorem by extra
inequalities. These new upper bounds for $m_{1}(\R^n)$ imply
by~\eqref{eq:fundamentalineq} new lower bounds for $\chim(\R^n)$ in
dimensions $3$, \dots,~$24$.

If one considers sets which avoid more than one distance one can ask
how~$N$ distances can be chosen so that the extreme density becomes as
small as possible: What is the value of $\inf\{\,m_{d_1, \ldots,
  d_N}(\R^n) : \text{$d_1$, \dots,~ $d_N > 0$}\,\}$ for fixed $N$? For
planar sets avoiding two distances Sz\'ekely \cite{Sze1984} showed
that $\inf\{\,m_{d_1, d_2}(\R^2) : d_1, d_2 > 0\,\} \leq
m_{1,\sqrt{3}}(\R^2) \leq 2/11 \approx 0.181818$. In
Section~\ref{sec:twodistances} we improve his result and show that
$\inf\{\,m_{d_1,d_2}(\R^2) : d_1,d_2 > 0\,\} \leq 0.0724046$.

Recently, Bukh~\cite{Buk}, using harmonic analysis and ideas
resembling Sz\'emere\-di's regularity lemma, showed that
$\inf\{\,m_{d_1,\ldots,d_N}(\R^n) : d_1$, \dots,~$d_N > 0\,\}$ drops
to zero exponentially in~$N$: He shows that there is a number $r$,
strictly greater than $1$, which depends only on $N$ and $n$ so that
if
\begin{eqnarray*}
\text{$d_2 / d_1 > r$, $d_3 / d_2 > r$, \dots, $d_N / d_{N-1} > r$},
\end{eqnarray*}
then $m_{d_1, \ldots, d_N}(\R^n) \leq (m_1(\R^n))^N$. This implies a
theorem of Furstenberg, Katznelson, and Weiss~\cite{FKW} that for
every subset~$A$ in the plane which has positive upper density there
is a constant~$d$ so that~$A$ does not avoid distances larger
than~$d$. Their original proof used tools from ergodic theory and
measure theory. Alternative proofs have been proposed by
Bourgain~\cite{Bou} using elementary harmonic analysis and by Falconer
and Mastrand \cite{FM} using geometric measure theory.  Bukh's result
also implies that $m_{d_1, \ldots, d_N}(\R^n)$ becomes arbitrarily
small if the distances $d_1$, $d_2$, \dots,~$d_N$ become arbitrarily
small. This is originally due to Bourgain \cite{Bou} and Falconer
\cite{Fal1986}. In Section~\ref{sec:manydistances} we give a short
proof of a variant of Bukh's result using our main theorem, where we
replace $(m_1(\R^n))^N$ by the weaker estimate $2^{-N}$. We could
improve this considerably, but we cannot get $(m_1(\R^n))^N$. Still
our estimate is strong enough to give all the implications
mentioned. Furthermore, our proof has the additional advantage that it
easily provides quantitative estimates about the spacing $r$ between
the distances.

The idea of linear programming bounds for packing problems of discrete
point sets in compact metric spaces goes back to Delsarte \cite{D} and
it has been successfully applied to a variety of situations. Cohn and
Elkies~\cite{CE} were the first who were able to set up a linear
programming bound for packing problems in non-compact spaces; by then
no less than 30 years since Delsarte's fundamental contribution had
gone by. Our main theorem can be viewed as a continuous analogue to
their linear programming bound.

\section{Proof of the main theorem}
\label{sec:main}

For the proof of our main theorem elementary notions from harmonic
analysis will be important. We recall these here. For details we refer
to, e.g., the book by Katznelson~\cite{K}.

A measurable, complex valued function $f\colon \R^n \to \C$ is called
\defi{periodic} if it is invariant under an
$n$-dimensional discrete subgroup of~$\R^n$ or, in other words, if
there is a basis $b_1$, \dots,~$b_n$ of $\R^n$ so that for all
$\alpha_1$, \dots,~$\alpha_n \in \Z$ we have $f(x +\nobreak
\sum_{i=1}^n \alpha_i b_i) = f(x)$. The set $L = \{\,\sum_{i=1}^n
\alpha_i b_i : \alpha_i \in \Z\,\}$ is called the \defi{period
  lattice} of $f$ and $L^* = \{\,u \in \R^n : \text{$x \cdot u \in \Z$
  for all $x \in L$}\,\}$ is called the \defi{dual lattice} of $L$.

The \defi{mean value} of a periodic function~$f$ is
given by
\begin{equation*}
M(f) = \lim_{T \to \infty} \frac{1}{\vol [-T,T]^n}
\int_{[-T,T]^n} f(x)\, dx.
\end{equation*}
For two periodic functions~$f$ and~$g$ we write $\langle f, g \rangle
= M(f\overline{g})$. We say that
$f$ is \defi{square-integrable} if $\langle f, f \rangle < \infty$. By
$\|f\| = \sqrt{\langle f, f \rangle}$ we denote its \defi{norm}. If
$f$ and $g$ are both square-integrable, then $\langle f, g \rangle$
exists.  For $u \in \R^n$ we define the \defi{Fourier coefficient}
$\widehat{f}(u) = \langle f, e^{i u \cdot x} \rangle$. Here, $x \cdot
y$ denotes the standard inner product on~$\R^n$.

Notice that the
support of $\widehat{f}$ is a discrete set, namely it lies in the dual
lattice of the period lattice of~$f$, scaled by~$2\pi$. If we let
$f_y(x) = f(y + x)$ for a vector $y \in \R^n$, then $\widehat{f_y}(u)
= \widehat{f}(u) e^{iu \cdot y}$. For square-integrable, periodic
functions $f$ and $g$ \defi{Parseval's formula}
\begin{equation*}
\langle f, g \rangle = \sum_{u
  \in \R^n} \widehat{f}(u) \overline{\widehat{g}(u)}
\end{equation*}
holds. By writing the latter sum we mean that we sum over the
intersection of the supports of $\widehat{f}$ and $\widehat{g}$.

\begin{proof}[Proof of Theorem~\ref{th:main}] 

  Let~$A$ be a measurable subset of~$\R^n$ that avoids distances
  $d_1$, \dots,~$d_N$. By $1_A$ we denote its characteristic function
  $1_A\colon \R^n\to\{0,1\}$ whose support is precisely $A$.  Without
  loss of generality we can assume that $1_A$ is a periodic function;
  in this case we say that $A$ is \defi{periodic}.

  Indeed, from any measurable set $A$ which avoids distances $d_1$,
  \dots,~$d_N$ we can construct a periodic set which avoids distances
  $d_1$, \dots,~$d_N$ and with upper density arbitrarily close to the
  one of $A$. To do this we intersect $A$ with a regular cube of side
  $2T$ so that $\vol(A \cap [-T,T]^n)/\vol[-T,T]^n$ is close to the
  upper density $\odelta(A)$ and so that
  $\vol([-T+d,T-d]^n)/\vol[-T,T]^n$, with $d = \max\{d_1, \ldots,
  d_N\}$, differs from $1$ only negligibly. Then we construct the new
  periodic set by tiling~$\R^n$ with copies of $A \cap [-T+d,T-d]^n$
  centered at the points of the lattice~$2T\Z^n$. Notice that, for a
  periodic set~$A$, one may replace the $\limsup$ in the definition
  of~$\odelta(A)$ by a simple limit.

  By $A - y$ we denote the translation of the set $A$ by the vector $-y
  \in \R^n$ so that its characteristic function satisfies $1_{A -
    y}(x) = 1_A(x + y) = (1_{A})_y(x)$. The following two properties are
  crucial:
\begin{eqnarray}
\label{eq:crucial1}
\langle 1_A, 1 \rangle & = & \odelta(A),\\
\label{eq:crucial2}
\langle 1_{A-y}, 1_A\rangle & = & \odelta(A \cap (A - y)),\;\;\text{for all $y \in \R^n$.}
\end{eqnarray}
In particular, we have $\langle 1_A, 1_A \rangle = \odelta(A)$ and
$\langle 1_{A-y}, 1_A \rangle = 0$ for all vectors $y$ of Euclidean
norm $d_1$, \dots,~$d_N$. Notice~$\langle 1_A, 1\rangle =
\widehat{1_A}(0)$. By applying Parseval's formula
to~\eqref{eq:crucial2}, we can express it in terms of the
Fourier coefficients of~$1_A$, thus obtaining
\begin{eqnarray*}
\widehat{1_A}(0) & = & \odelta(A),\\
\sum_{u \in \R^n} |\widehat{1_A}(u)|^2 e^{i u \cdot y} & = & \odelta(A \cap (A - y))\;\;\text{for all $y \in \R^n$}.
\end{eqnarray*}

Now we consider the function
\begin{equation}
\label{eq:autocorrelation}
\varphi(y) = \sum_{u \in \R^n}
|\widehat{1_A}(u)|^2 e^{i u \cdot y} = \odelta(A \cap (A-y)),
\end{equation}
which is called the \defi{autocorrelation function} (or \defi{two-point
correlation function}) of $1_A$. By taking spherical averages we
construct from it a radial function $f$ whose values only depend on
the norm of the vectors. In other words, we set
\begin{equation*}
f(y) = \frac{1}{\omega_n} \int_{S^{n-1}} \varphi(\|y\|\xi)\, d\omega(\xi).
\end{equation*}
Here $\omega$ denotes the standard surface measure on the unit sphere
$S^{n-1} = \{\,\xi \in \R^n : \xi \cdot \xi = 1\,\}$ and $\omega_n =
\omega(S^{n-1}) = (2\pi^{n/2})/\Gamma(n/2)$. Clearly, $f(0) =
\odelta(A)$, and $f(y) = 0$ whenever $\|y\| \in \{d_1,\ldots,d_N\}$.
Because of the formula (cf.~Schoenberg~\cite[(1.6)]{Sch}, see
\eqref{eq:omegan} for an explicit expression for $\Omega_n$)
\begin{equation*}
\frac{1}{\omega_n} \int_{S^{n-1}} e^{iu\cdot \xi}\, d\omega(\xi) = \Omega_n(\|u\|)
\end{equation*}
we can represent $f$ in the form
\begin{equation*}
f(y) = \sum_{t \geq 0} \alpha(t) \Omega_n(t \|y\|),
\end{equation*}
where~$\alpha(t)$ is the sum of $|\widehat{1_A}(u)|^2$ for vectors $u$
having norm~$t$, so the $\alpha(t)$'s are real and
nonnegative. Furthermore, $\alpha(0) = |\widehat{1_A}(0)|^2 =
\odelta(A)^2$ and $\sum_{t \geq 0} \alpha(t) = f(0) = \odelta(A)$.

So the following linear program in the variables $\alpha(t)$ gives an
upper bound for the upper density of any measurable set which avoids
the distances $d_1$, \dots,~$d_N$:
\begin{equation}
\label{eq:univariateprimal}
\begin{split}
\sup\bigl\{\,\alpha(0)\; : \; \hbox{} & \text{$\alpha(t) \geq 0$ for all $t \geq 0$},\\
& \sum_{t \geq 0} \alpha(t) = 1,\\
& \sum_{t \geq 0} \alpha(t) \Omega_n(t d_k) = 0\  \text{for $k = 1$, \dots,~$N$}\,\bigr\}.
\end{split}
\end{equation}
Above, all but a countable subset of the~$\alpha(t)$'s are zero.  Note
moreover that we used the normalization $\sum_{t \geq 0} \alpha(t) =
1$. This linear program has infinitely many variables~$\alpha(t)$ but
only $N+1$ equality constraints. A dual of it is
\begin{equation}
\label{eq:univariatedual}
\begin{split}
\inf\bigl\{\,z_0 \; : \; \hbox{} & z_0 + z_1 + z_2 + \cdots + z_N \geq 1,\\
& z_0 + z_1 \Omega_n(t d_1) + z_2 \Omega_n(t d_2) + \cdots + z_N \Omega_n(t d_N) \geq 0\\
& \qquad \text{for all $t > 0$}\, \bigr\},
\end{split}
\end{equation}
which has $N+1$ variables $z_0, z_1, z_2, \ldots, z_N$ and infinitely
many constraints.  As usual, weak duality holds between the pair of
linear programs \eqref{eq:univariateprimal} and
\eqref{eq:univariatedual}: If $\alpha(t)$ satisfies the conditions in
\eqref{eq:univariateprimal} and if $(z_0, z_1, \ldots, z_N)$ satisfies
the conditions in \eqref{eq:univariatedual}, then
\begin{equation*}
\alpha(0) \leq \sum_{t \geq 0} \alpha(t) (z_0 + z_1 \Omega_n(td_1) + z_2 \Omega_n(td_2) + \cdots + z_N \Omega_n(td_N)) = z_0,
\end{equation*}
which finishes the proof of our main theorem.
\end{proof}

\section{Sets avoiding one distance}
\label{sec:onedistance}

It is notable that the linear programming bounds for the extreme
density of sets avoiding exactly one distance allow for an analytic
optimal solution. Since this problem is scaling invariant we can
assume that we consider sets avoiding the unit distance $d_1 = 1$. Let
$j_{\alpha,k}$ be the $k$-th positive zero of the Bessel function
$J_{\alpha}$. It is known that the absolute minimum of the function
$\Omega_n$ is attained at~$j_{n/2,1}$ (see Askey, Andrews,
Roy~\cite[(4.6.2)]{AAR}, and Watson~\cite[Chapter 15, \S 31]{W}). So,
the point $(z_0, z_1)$ which is determined by the equations
\begin{eqnarray*}
z_0 + z_1 & = & 1\\
z_0 + z_1 \Omega_n(j_{n/2,1}) & = & 0
\end{eqnarray*}
provides the optimal solution for the linear program in
Theorem~\ref{th:main}. Hence,
\begin{equation}
\label{eq:solvedmystery}
z_0 = \Omega_n(j_{n/2,1})/(\Omega_n(j_{n/2,1}) - 1) \geq m_{1}(\R^n),
\end{equation}
and this gives by \eqref{eq:fundamentalineq} a lower bound for the
measurable chromatic number, namely $\chim(\R^n) \geq
1-1/\Omega_n(j_{n/2,1})$. It is interesting to notice that this lower
bound coincides with the one provided by Bachoc, Nebe, Oliveira, and
Vallentin~\cite[Corollary 8.2]{BNOV}, albeit by a shift of one
dimension. This shift of one dimension is due to the fact that Bachoc,
Nebe, Oliveira, and Vallentin~\cite{BNOV} study the problem of sets
avoiding one distance on the $(n-1)$-dimensional unit sphere $S^{n-1}
\subseteq \R^n$ and the lower bound for the measurable chromatic
number $\chim(\R^n)$ was obtained by upper bounding the density of
sets in the unit sphere which avoid the distance $d$ where $d$ goes to
zero. So, we see now that this limit process gives a lower bound for
the measurable chromatic number of $\R^{n-1}$ and not only for the one
of $\R^n$.

\subsection{Adding extra inequalities}

It is possible to strengthen the main theorem and the resulting bound
\eqref{eq:solvedmystery} by introducing extra inequalities. Consider a
regular simplex in $\R^n$ with edge length $1$ having vertices $v_1,
\ldots, v_{n+1}$. A set $A \subseteq \R^n$ which avoids the unit
distance can only contain one vertex of this regular simplex. So we
have for the autocorrelation function $\varphi$ of the characteristic
function $1_A$ defined in~\eqref{eq:autocorrelation} that
\begin{equation}
\label{eq:clique-phi}
\begin{split}
  \varphi(v_1) + \cdots + \varphi(v_{n+1}) & = \odelta(A \cap (A - v_1)) + \cdots + \odelta(A \cap (A - v_{n+1}))\\
& \leq \odelta(A)  =  \varphi(0).
\end{split}
\end{equation}

Let~$\ort(\R^n)$ be the \defi{$n$-dimensional orthogonal group}, that
is, the set of all $n \times n$ real matrices~$Z$ such that $Z^t Z =
I$. Let~$\mu$ denote the Haar measure over~$\ort(\R^n)$ normalized by
$\mu(\ort(\R^n)) = 1$. Taking spherical averages of~$\varphi$ is the
same as symmetrizing~$\varphi$ with respect to the orthogonal group,
i.e., for all~$y \in \R^n$,
\begin{equation*}
f(y) = \frac{1}{\omega_n}\int_{S^{n-1}} \varphi(\|y\|\xi)\, d\omega(\xi)
= \int_{\ort(\R^n)} \varphi(Zy)\, d\mu(Z)
\end{equation*}
(this follows, e.g., from Theorem~3.7 in the book by Mattila~\cite{Mat}).

Let~$f$ be, as above, the radial function obtained by the
symmetrization of~$\varphi$. For a nonnegative real number~$t$, we
write~$f(t)$ for the common value of~$f$ for vectors of
length~$t$. Then, by symmetrizing both sides of~\eqref{eq:clique-phi}
with respect to the orthogonal group, and since distances are
preserved by the action of~$\ort(\R^n)$, we conclude that the inequality
\begin{equation}
\label{eq:extraineq}
f(\|v_1\|) + \cdots + f(\|v_{n+1}\|) \leq 1
\end{equation}
can be used to strengthen our original linear program. Here, observe
that we already took into account the normalization~$f(0) = 1$,
introduced in~\eqref{eq:univariateprimal}.

If we center a regular simplex at the origin, the above inequality
specializes to
\begin{equation*}
(n+1) f(\sqrt{1/2-1/(2n + 2)}) \leq 1,
\end{equation*}
which gives the following strengthening of the dual formulation
\eqref{eq:univariatedual}
\begin{equation*}
\begin{split}
\inf\Big\{\,z_0 + z_c \;\; : \;\; & z_c \geq 0,\\
& z_0 + z_1 + z_c (n+1) \geq 1,\\
& z_0 + z_1 \Omega_n(t) + z_c (n+1) \Omega_n(t \sqrt{1/2-1/(2n + 2)}) \geq 0\\
& \quad  \text{for all $t \geq 0$}\,\Big\}.
\end{split}
\end{equation*}
In Table~\ref{table:thenewtable} we give the new upper bounds on
$m_1(\R^n)$ we get for $n = 4, \ldots, 24$ by solving the linear
program on a computer (we discuss numerical issues at the end of this
section) which are improvements over the values which Sz\'ekely and
Wormald give in \cite{SW}. This in turn gives new lower bounds for the
measurable chromatic number for $n = 4, \ldots, 24$.

\renewcommand{\thetable}{\arabic{section}.\arabic{table}}

\begin{table}[htb]
\begin{tabular}{r|r|r|r|r}
$n$ & best upper bound &  new upper & best lower bound & new lower\\ 
 & for $m_1(\mathbb{R}^n)$ &  bound for & for $\chi_m(\mathbb{R}^n)$ & bound for\\
 & previously known & $m_1(\mathbb{R}^n)$  & previously known &  $\chi_m(\mathbb{R}^n)$ \\
\hline
2  & 0.279069 \cite{Sze1984}   & 0.268412   & 5 \cite{Fal}     &      \\
3  & 0.187500 \cite{SW}        & 0.165609   & 6 \cite{Fal}     & 7    \\
4  & 0.128000 \cite{SW}        & 0.112937   & 8 \cite{SW}      & 9    \\
5  & 0.0953947 \cite{SW}       & 0.0752845  & 11 \cite{SW}     & 14   \\
6  & 0.0708129 \cite{SW}       & 0.0515709  & 15 \cite{SW}     & 20   \\
7  & 0.0531136 \cite{SW}       & 0.0361271  & 19 \cite{SW}     & 28   \\
8  & 0.0346096 \cite{SW}       & 0.0257971  & 30 \cite{SW}     & 39   \\
9  & 0.0288215 \cite{SW}       & 0.0187324  & 35 \cite{SW}     & 54   \\
10 & 0.0223483 \cite{SW}       & 0.0138079  & 48 \cite{BNOV}   & 73   \\
11 & 0.0178932 \cite{SW}       & 0.0103166  & 64 \cite{BNOV}   & 97   \\
12 & 0.0143759 \cite{SW}       & 0.00780322 & 85 \cite{BNOV}   & 129  \\
13 & 0.0120332 \cite{SW}       & 0.00596811 & 113 \cite{BNOV}  & 168  \\
14 & 0.00981770 \cite{SW}      & 0.00461051 & 147 \cite{BNOV}  & 217  \\
15 & 0.00841374 \cite{SW}      & 0.00359372 & 191 \cite{BNOV}  & 279  \\
16 & 0.00677838 \cite{SW}      & 0.00282332 & 248 \cite{BNOV}  & 355  \\
17 & 0.00577854 \cite{SW}      & 0.00223324 & 319 \cite{BNOV}  & 448  \\
18 & 0.00518111 \cite{SW}      & 0.00177663 & 408 \cite{BNOV}  & 563  \\
19 & 0.00380311 \cite{SW}      & 0.00141992 & 521 \cite{BNOV}  & 705  \\
20 & 0.00318213 \cite{SW}      & 0.00113876 & 662 \cite{BNOV}  & 879  \\
21 & 0.00267706 \cite{SW}      & 0.00091531 & 839 \cite{BNOV}  & 1093 \\
22 & 0.00190205 \cite{SW}      & 0.00073636 & 1060 \cite{BNOV} & 1359 \\
23 & 0.00132755 \cite{SW}      & 0.00059204 & 1336 \cite{BNOV} & 1690 \\
24 & 0.00107286 \cite{SW}      & 0.00047489 & 1679 \cite{BNOV} & 2106 \\
\end{tabular}
\\[0.3cm]
\caption{Upper bounds for $m_1(\R^n)$ and lower bounds for $\chim(\R^n)$.}
\label{table:thenewtable}
\end{table}

However, in dimension $2$ we only get an upper bound of $0.287119$. To
improve Sz\'ekely's bound of $12/43 \approx 0.279069$ in the plane, we
replace the regular triangle centered at the origin by more
triangles. We use the following three triples of squared norms
$(\|v_1\|^2, \|v_2\|^2, \|v_3\|^2)$ for \eqref{eq:extraineq}:
$(2.4,2.4,0.360314)$, $(3.1,3.1,6.524038)$ $(3.7,3.7,7.417141)$, where
the last coordinate of $(a,b,c)$ is a root of
$3(a^2+b^2+c^2+1)-(a+b+c+1)^2$. This condition assures that
the determinant of the positive semidefinite Gram matrix
\begin{equation*}
\begin{pmatrix}
a & \frac{1}{2}(a + b - 1) & \frac{1}{2}(a + c - 1)\\
\frac{1}{2}(a + b - 1) & b & \frac{1}{2}(b + c - 1)\\
\frac{1}{2}(a + c - 1) & \frac{1}{2}(b + c - 1) & c
\end{pmatrix}
\end{equation*}
of the points $v_1, v_2, v_3$ of a corresponding regular simplex
vanishes. Solving the corresponding linear program yields the new
upper bound of $0.268412$. We found the three triples by considering
all triples $(a,b,c)$ with $a,b = 0.1j$ with $j = 0, \ldots, 40$.

In dimension $3$ we use three quadruples $(\|v_1\|^2, \|v_2\|^2,
\|v_3\|^2, \|v_4\|^2)$ of squared norms for \eqref{eq:extraineq}:
$(0.3,0.4,0.4,0.417157)$, $(1.9,1.9,1.9,0.189372)$,
$(2,2,2,0.225148)$, where the last coordinate of $(a,b,c,d)$ is a root
of $3(a^2+b^2+c^2+d^2+1) - 2(ab + ac + ad + bc + bd + cd) - 2(a + b +
c + d)$. Solving the corresponding linear programming problem yields the new
upper bound of $0.165609$. We found the three quadruples by
considering all triples $(a,b,c,d)$ with $a,b,c = 0.1j$ with $j = 0,
\ldots, 40$.

\subsection{Numerical calculations}

A few technical remarks concerning the numerical calculations are in
order. For solving the linear programs we use the software {\tt
  lpsolve} \cite{lpsolve} and we generate the input using the program
{\tt GP/PARI} \cite{PARI}. We discretize the conditions of the form
\begin{eqnarray*}
z_0 + z_1 \Omega_n(t) + z_c (n+1) \Omega_n(t \sqrt{1/2-1/(2n + 2)}) \geq 0
\quad  \text{for all $t \geq 0$}
\end{eqnarray*}
by discretizing the interval $[0,20]$ into steps of size $0.0005$.

Now we demonstrate in the case $n = 4$ how we turn the numerical
calculations into a rigorous mathematical proof: The linear
program has the optimal numerical solution $z_0 = 0.0826818$, $z_1 =
0.7660402$, $z_c = 0.0302556$. A lower bound of the
minimum of the function
\begin{equation*}
z(t) = z_0 + z_1 \Omega_4(t) + 5z_c \Omega_4(\sqrt{2/5}t)
\end{equation*}
in $t \in [0,20]$ is $-0.00000006$. The function $z(t)$ is positive
for $t \geq 20$ because there $\Omega_4(t) \geq -0.02$ and
$\Omega_4(\sqrt{2/5}t) \geq -0.04$ holds. Thus by adding $0.00000006$
to $z_0$ we make sure that the new function $z(t)$ is
nonnegative. This only slightly effects the value of the bound.

\section{Planar sets avoiding two distances}
\label{sec:twodistances}

In this section we quickly report on the problem of finding the smallest
extreme density a measurable set in the plane can have which avoids
exactly two distances, i.e., $\inf\{\,m_{d_1, d_2}(\R^2) : d_1, d_2 >
0\,\}$.  Sz\'ekely \cite{Sze1984} showed that this number is at most
$2/11$ by giving an upper bound for $m_{1,\sqrt{3}}(\R^2)$. By solving
the corresponding linear program on the computer we improve his bound
to $m_{1,\sqrt{3}}(\R^2) \leq 0.170213$. By adjusting the distances we
can improve this further: $m_{1,j_{1,2}/j_{1,1}}(\R^2) \leq 0.141577$
where $j_{1,1}$ and $j_{1,2}$ are the first two positive zeros of the
Bessel function $J_1$.

By combining Bukh's result, which we explained in the introduction,
with our new bound on $m_1(\R^2)$ from the previous section we can
improve on this even further: 
\begin{equation*}
\inf\{\,m_{d_1, d_2}(\R^2) : d_1, d_2 > 0\,\} \leq
\left(m_1(\R^2)\right)^2 \leq 0.072046.
\end{equation*}

\section{Sets avoiding many distances}
\label{sec:manydistances}

In this section we give a proof of a variant of Bukh's
result~\cite[Theorem~1]{Buk} about densities of sets avoiding many
distances. His proof is based on a so-called zooming out lemma which
resembles Szemer\'edi's regularity lemma for dense graphs, whereas our
proof is an easy consequence of Theorem~\ref{th:main} and simple
properties of the function~$\Omega_n$.

\begin{theorem}
\label{th:bukh}
For every positive integer~$N$ there is a number $r = r(N)$ strictly
greater than~$1$ such that for distances~$d_1$, \dots,~$d_N$ with
\begin{equation}
\label{eq:bukh}
\text{$d_2 / d_1 > r$, $d_3 / d_2 > r$, \dots, $d_N / d_{N-1} > r$}
\end{equation}
we have $m_{d_1, \ldots, d_N}(\R^n) \leq 2^{-N}$.
\end{theorem}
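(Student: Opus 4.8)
The plan is to apply Theorem~\ref{th:main} with an explicit candidate certificate, namely $z_0 = 2^{-N}$ and $z_k = 2^{k-1-N}$ for $k = 1, \dots, N$; feasibility of this choice is all that is needed, and the constant $r = r(N)$ (which also depends on the fixed dimension $n$) will fall out of the verification. Two elementary identities drive everything: first $z_0 + z_1 + \dots + z_N = 2^{-N} + 2^{-N}(2^N-1) = 1$, so the requirement that the $z_i$ sum to at least one holds with equality; second, writing $S_l = z_0 + \dots + z_l = 2^{l-N}$, the partial sums satisfy $S_{l-1} = z_l$ for $1 \le l \le N$. This second identity is precisely what will let one ``bad'' term $z_l\Omega_n(td_l)$ be absorbed by the sum of all earlier ones.

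Before the estimate I would collect the facts about $\Omega_n$ that I use: it is continuous, $\Omega_n(0) = 1$, $\Omega_n(t) \to 0$ as $t \to \infty$ (from $J_\nu(t) = O(t^{-1/2})$), and, being the spherical average of the character $e^{iu\cdot\xi}$ (cf. the formula preceding the proof of Theorem~\ref{th:main}), it satisfies $|\Omega_n(t)| \le 1$ with equality only at $t = 0$; hence $m_n := \min_{t \ge 0}\Omega_n(t) = \Omega_n(j_{n/2,1})$ lies strictly in $(-1,0)$, so $1 - |m_n| > 0$. Now fix the error tolerance $\eta := 2^{-N}(1 - |m_n|) \in (0, 2^{-N})$. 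By continuity at the origin choose $s_0 > 0$ with $\Omega_n(s) > 1 - \eta$ on $[0,s_0]$; by the decay at infinity choose $s_1 > s_0$ with $|\Omega_n(s)| < \eta$ on $[s_1,\infty)$; and set $r := s_1/s_0 > 1$ (any larger value works as well). All of $\eta, s_0, s_1, r$ depend only on $n$ and $N$.

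The core step is to show that for distances with $d_{k+1}/d_k > r$ and every $t > 0$ one has $g(t) := z_0 + \sum_{k=1}^N z_k\Omega_n(td_k) \ge 0$. The hypothesis forces $d_1 < d_2 < \dots < d_N$; let $l$ be the number of indices with $td_k < s_1$, so these are $k = 1,\dots,l$. Because consecutive ratios exceed $r = s_1/s_0$, any index $k \le l-1$ in fact satisfies $td_k \le s_0$: otherwise $td_{k+1} > r\,td_k > s_1$, contradicting $k+1 \le l$. Therefore $\Omega_n(td_k) > 1 - \eta$ for $k \le l-1$, while $\Omega_n(td_l) \ge m_n$ trivially and $\Omega_n(td_k) > -\eta$ for $k \ge l+1$. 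Substituting these bounds, using $z_k \ge 0$, collapsing $z_0 + \sum_{k \le l-1}z_k = S_{l-1} = z_l$, and estimating the error by $\eta\sum_{k=1}^N z_k = \eta(1 - z_0) < \eta$, the inequality reduces to $z_l(1 - |m_n|) - \eta \ge 0$, which holds since $z_l \ge z_1 = 2^{-N}$ and $\eta = 2^{-N}(1 - |m_n|)$. The degenerate case $l = 0$ (all $td_k \ge s_1$) is immediate: $g(t) \ge z_0 - \eta(1 - z_0) > 0$. Since also $g(0) = \sum_k z_k = 1$, the hypotheses of Theorem~\ref{th:main} are met for all $t \ge 0$, and the theorem gives $\odelta(A) \le z_0 = 2^{-N}$ for every measurable $A$ avoiding $d_1, \dots, d_N$, hence $m_{d_1,\dots,d_N}(\R^n) \le 2^{-N}$.

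The only genuinely delicate part is the bookkeeping of the two error tails --- the small-scale indices where $\Omega_n$ is near $1$ and the large-scale indices where it is near $0$ --- together with arranging that at most one intermediate scale $td_l$ needs to be controlled by the crude bound $m_n$; this is why one needs both the spectral gap $1 - |m_n| > 0$ and a spacing $r > s_1/s_0$, and why a geometric progression is the natural certificate. As a byproduct the argument makes $r(N)$ fully explicit as a ratio of two thresholds of $\Omega_n$, which yields the quantitative control on the spacing advertised in the introduction.
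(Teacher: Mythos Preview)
Your proof is correct and follows essentially the same approach as the paper: the identical geometric certificate $z_0=2^{-N}$, $z_k=2^{k-1-N}$, the same definition of $r$ as the ratio of a ``near-zero'' threshold to a ``near-one'' threshold of $\Omega_n$, and the same mechanism whereby the partial sum $S_{l-1}$ exactly matches $z_l$. The only differences are cosmetic: you verify $g(t)\ge 0$ directly by locating the single pivot index $l$ and splitting into three ranges, whereas the paper carries out the equivalent estimate by downward induction on~$j$; and you invoke the soft bound $m_n>-1$ from the integral representation of $\Omega_n$ rather than the sharper $\Omega_n\ge -1/2$ that the paper derives from Bessel recurrences.
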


In the proof of Theorem~\ref{th:bukh} some facts about the
function~$\Omega_n$ will be useful. First, we have
\begin{equation}
\label{eq:jbounds}
|J_0(t)| \leq 1,\quad\text{and}\quad |J_\alpha(t)| \leq 1 / \sqrt{2}
\quad\text{for all $\alpha > 0$ and $t \geq 0$}
\end{equation}
(cf.~(4.9.13) in~Andrews, Askey, and~Roy~\cite{AAR}). From this, it
follows at once that $\lim_{t\to\infty} \Omega_n(t) = 0$ for~$n >
2$. For~$n = 2$ the same follows, e.g., from the asymptotic expansion
for~$J_\alpha$ (cf.~(4.8.5) in~Andrews, Askey, and~Roy~\cite{AAR}). 

Moreover,
\begin{equation}
\label{eq:omegamin}
\Omega_n(t) \geq -1/2,\quad\text{for all $n \geq 2$ and $t \geq 0$}.
\end{equation}
To see this, set $\alpha = (n-2)/2$. It is known that for~$\alpha >
-1/2$ we have
\[
J_{\alpha-1}(t) + J_{\alpha+1}(t) = \frac{2\alpha}{t} J_\alpha(t)
\]
(cf. Andrews, Askey, and~Roy~\cite[(4.6.5)]{AAR}). Combining this
identity with~\eqref{eq:omegan} we obtain
\begin{equation}
\label{eq:omegarec}
\begin{split}
\Omega_n(t) & = \Gamma(\alpha + 1) \Bigl(\frac{2}{t}\Bigr)^\alpha
\frac{t}{2\alpha} (J_{\alpha-1}(t) + J_{\alpha+1}(t))\\
 &= \Omega_{n-2}(t) + \Gamma(\alpha)
 \Bigl(\frac{2}{t}\Bigr)^{\alpha-1} J_{\alpha+1}(t).
\end{split}
\end{equation}

Now, recall that the global minimum of~$\Omega_n$ is attained
at~$j_{\alpha+1,1}$, the first positive zero of~$J_{\alpha+1}$
(cf.~Section~\ref{sec:onedistance}). From~\eqref{eq:omegarec}, we have
that $\Omega_n(j_{\alpha+1, 1}) = \Omega_{n-2}(j_{\alpha+1,1})$. It
follows that the minimum of~$\Omega_n$ is at least the minimum
of~$\Omega_{n-2}$. To finish the proof of~\eqref{eq:omegamin} we have
to check~$\Omega_2$ and~$\Omega_3$, what can be easily accomplished.

\begin{proof}[Proof of Theorem~\ref{th:bukh}]
Given~$N > 0$, set $\varepsilon = 1/(N 2^{N+1})$.
Since $\Omega_n(0) = 1$ and since~$\Omega_n$ is continuous, there is a
number $t_0 > 0$ such that $\Omega_n(t) > 1 - \varepsilon$ for $t \leq
t_0$. Likewise, since $\lim_{t\to\infty} \Omega_n(t) = 0$, there is a
number $t_1 > t_0$ such that $|\Omega_n(t)| < \varepsilon$ for $t \geq
t_1$.

Set $r = r(N) = t_1 / t_0$ and let distances $d_1$, \dots,~$d_N$ be
given such that~\eqref{eq:bukh} is satisfied. With this we claim that,
for $1 \leq j \leq N$,
\begin{equation*}
\sum_{i=j}^N \frac{1}{2^{N-i+1}}\cdot\Omega_n(t d_i) \geq
-\frac{1}{2^{N-j+2}} - (N - j)\varepsilon.
\end{equation*}

Before we prove the claim, we show how to apply it.  By taking $j = 1$
in the claim, and since by our choice of~$\varepsilon$ we have
$-(N-1)\varepsilon \geq -1 / 2^{N+1}$, it follows that
\begin{eqnarray*}
\sum_{i=1}^N \frac{1}{2^{N-i+1}} \cdot \Omega_n(t d_i) \geq -
\frac{1}{2^N}.
\end{eqnarray*}
Now we may set $z_0 = 1 / 2^N$ and $z_i = 1 / 2^{N-i+1}$ for $i = 1$,
\dots,~$N$ and apply Theorem~\ref{th:main}, proving our result.

To finish, we prove the claim by induction. For~$j = N$, the statement
follows immediately from~\eqref{eq:omegamin}. Now, suppose the
statement is true for some $1 < j \leq N$. We show that the statement
is also true for~$j - 1$ by distinguishing two cases.

First, for $t \leq t_0 / d_{j-1}$, we have from the choice of~$t_0$
that $\Omega_n(t d_{j-1}) > 1 - \varepsilon$. Using this and the
induction hypothesis, we then have that
\begin{equation*}
\begin{split}
\sum_{i=j-1}^N \frac{1}{2^{N-i+1}}\cdot\Omega_n(td_i) &=
\frac{1}{2^{N-j+2}}\cdot\Omega_n(t d_{j-1}) + 
\sum_{i=j}^N \frac{1}{2^{N-i+1}}\cdot\Omega_n(td_i)\\
&\geq \frac{1 - \varepsilon}{2^{N-j+2}} - \frac{1}{2^{N-j+2}} -
(N-j)\varepsilon\\
&\geq -\frac{1}{2^{N-j+3}} - (N-j+1)\varepsilon.
\end{split}
\end{equation*}

Now suppose $t \geq t_0 / d_{j-1}$. Observe that, for $j \leq i \leq
N$, we have $t d_i \geq t_0 d_i / d_{j-1} \geq t_0 r = t_1$, hence
$|\Omega_n(td_i)| < \varepsilon$. So, by using~\eqref{eq:omegamin}, we
have
\begin{equation*}
\begin{split}
\sum_{i=j-1}^N \frac{1}{2^{N-i+1}}\cdot\Omega_n(td_i) &=
\frac{1}{2^{N-j+2}}\cdot\Omega_n(t d_{j-1}) + 
\sum_{i=j}^N \frac{1}{2^{N-i+1}}\cdot\Omega_n(td_i)\\
&\geq -\frac{1}{2^{N-j+3}} - (N - j + 1) \varepsilon,
\end{split}
\end{equation*}
finishing the proof of the claim.
\end{proof}

\section*{Acknowledgements}

We thank Christine Bachoc for helpful discussions, Noam D.~Elkies for
enlightening comments, and Boris Bukh and Lex Schrijver for their
valuable suggestions on our manuscript.

\end{document}